\newcommand\C{\mathbb{C}}
\newcommand\Q{\mathbb{Q}}
\newcommand\N{\mathbb{N}}
\newcommand\OO{\mathcal{O}}
\newcommand\G{\mathbb{G}}
\newcommand\LL{\mathscr{L}}
\newcommand\Z{\mathbb{Z}}
\newcommand{\set}[1]{\left\{ {#1} \right\}}
\newcommand{\vect}[1]{\langle {#1} \rangle}
\newcommand{\abs}[1]{\lvert {#1} \rvert}
\def\Ind#1#2{#1\setbox0=\hbox{$#1x$}\kern\wd0\hbox to 0ex{\hss$#1\mid$\hss}
\lower.9\ht0\hbox to 0ex{\hss$#1\smile$\hss}\kern\wd0}
\def\Notind#1#2{#1\setbox0=\hbox{$#1x$}\kern\wd0\hbox to 0ex{\mathchardef\nn="0236\hss$#1\nn$\kern1.4\wd0\hss}\hbox to 0ex{\hss$#1\mid$\hss}\lower.9\ht0
\hbox to 0ex{\hss$#1\smile$\hss}\kern\wd0}
\def\indi#1{\mathop{\ \ \hbox to 0ex{\hss$\vert^{\hbox to 0ex{$\scriptstyle#1$\hss}}$\hss}
\lower1ex\hbox to 0ex{\hss$\smile$\hss}\ \ }}
\def\nindi#1{\mathop{\ \ \hbox to 0ex{\hss$\!\not{\vert}^{\hbox to 0ex{$\scriptstyle\,#1$\hss}}$\hss}
\lower1ex\hbox to 0ex{\hss$\smile$\hss}\ \ }}
\newcommand{\findep}[1][]{%
  \mathrel{
    \mathop{
      \vcenter{
        \hbox{\oalign{\noalign{\kern-.3ex}\hfil$\vert$\hfil\cr
              \noalign{\kern-.7ex}
              $\smile$\cr\noalign{\kern-.3ex}}}
      }
    }\displaylimits_{#1}
  }
}
\newcommand{\nfindep}[1][]{%
  \mathrel{
    \mathop{
      \vcenter{
	\hbox{\oalign{\noalign{\kern-.3ex}\hfil$\!\not{\vert}$\hfil\cr
              \noalign{\kern-.7ex}
              $\smile$\cr\noalign{\kern-.3ex}}}
      }
    }\displaylimits_{#1}
  }
}
\newcommand{\setword}[2]{%
  \phantomsection
  #1\def\@currentlabel{\unexpanded{#1}}\label{#2}%
}
\newcommand\LLr{\mathscr{L}_{\mathrm{ring}}}
\newcommand{\NSOP}[1]{\mathrm{NSOP}_{#1}}  
\newcommand{\ACFG}{\mathrm{ACFG}}
\newcommand{\ACF}{\mathrm{ACF}}
\newcommand{\acl}{\mathrm{acl}}
\newcommand{\End}{\mathrm{End}}
\newcommand{\Id}{\mathrm{Id}}
\newcommand{\Frob}{\mathrm{Frob}}
\newcommand{\Mat}{\mathrm{Mat}}
\newtheorem{df}{Definition}[section]
\newtheorem{prop}[df]{Proposition}		
\newtheorem{lm}[df]{Lemma} 
\newtheorem{cor}[df]{Corollary} 
\newtheorem{fact}{Fact} 
\newtheorem{axioms}[df]{Axioms} 
\theoremstyle{remark}
\newtheorem{rk}[df]{Remark}
\theoremstyle{remark}
\newsavebox{\auteurbm}
\let\oldabstract\abstract
\let\oldendabstract\endabstract
\renewenvironment{abstract}
{%
               {\list{}{\addtolength{\leftmargin}{8em} 
                        \listparindent 1.5em%
                        \itemindent    \listparindent%
                        \rightmargin   \leftmargin%
                        \parsep        \z@ \@plus\p@}%
                \item\relax}%
               {\endlist}%
\oldabstract}
{\oldendabstract}
\title{Generic expansion of an abelian variety by a subgroup}
\author{Christian d'Elb\'ee}\thanks{Einstein Institut, Hebrew University of Jerusalem.}
\date{\small\today}
\begin{document}

\maketitle				

\begin{abstract}
  Let $A$ be an abelian variety in a field of characteristic $0$. We prove that the expansion of $A$ by a generic divisible subgroup of $A$ with the same torsion exists provided $A$ has few algebraic endomorphisms, namely $\End(A)=\Z$. The resulting theory is $\NSOP 1$ and not simple. Note that there exist abelian varieties $A$ with $\End(A) = \Z$ of any genus. We indicate how this result can be extended to any simple abelian variety by considering the expansion by a predicate for some submodule over $\End(A)$.
\end{abstract}

This short note has two aims. First, it contributes to the recent interest in new $\NSOP 1$ theories arising from generic constructions~\cite{CR16}, \cite{KR17}, \cite{KrR18}, \cite{CKr17} and it follows the author's work in~\cite{dE18A} \cite{dE18B}. Further, it witnesses an appreciated phenomenon: an algebraic constraint has a model-theoretic consequence. 

\section{Freeness in abelian varieties}

Let $K$ be an algebraically closed field of characteristic $0$ and let $A$ be an abelian variety defined over a subfield $k_0$ of $K$. Let $\oplus,\ominus,e$ be the operations and neutral element in $A(K)$, and $\LL$ be the language consisting of $\set{\oplus, \ominus, e}$ and a predicate $W$ for each subvarieties of $A(K)^n$ defined over $k_0$, for each $n\in \N$. We consider the natural $\LL$-structure $(A(K),\oplus, \ominus, e, (W_t(K))_t)$ on $A(K)$, and let $T$ be the $\LL$-theory of this structure. $T$ has quantifier elimination in $\LL$ and does not depend on the choice of $K\supset k_0$. See~\cite[2.1.2]{Cay14} for a precise discussion on that setting.


An abelian variety is \emph{simple} if it is not isogenous to a product of two abelian varieties. This is equivalent to saying that its only abelian subvarieties are itself and $\set{e}$. Poincaré's reducibility theorem (see for instance~\cite[Chapter 5]{B98}) implies that every abelian variety is isogenous to a finite product of simple abelian varieties. 

The algebraic group structure on $A$ induces an algebraic group structure on each cartesian power $A^n$, and we denote again by $\oplus,\ominus,e$ the corresponding operations and constant.

\begin{df}
  Let $V$ be an irreducible subvariety of $A^n$. We say that $V$ is \emph{free} if it is not contained in a translate of an algebraic subgroup of $A^n$. 
\end{df}

Let $\End(A)$ be the ring of algebraic (or equivalently $\LL$-definable) endomorphismes of $A$. This ring only depends on $T$.  
If $\End(A) = \Z$ then $A$ is simple. The converse does not hold since elliptic curves are simple (see Remark~\ref{rk_ex}). In a simple abelian variety, every endomorphism is surjective and has finite kernel. The following is \cite[Lemma 4.1.(i)]{BHP18}.

\begin{fact}\label{fact_subgroups}  
Let $H$ be a connected algebraic subgroup of $A^n$. Then there exists $\theta\in \Mat_{n}(\End(A))$ such that $H = (\ker (\theta))^0$.
\end{fact}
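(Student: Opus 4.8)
The plan is to realize $H$ as the identity component of the kernel of a suitable endomorphism of $A^n$, built from the quotient map $A^n\to A^n/H$ together with a splitting provided by Poincar\'e's reducibility theorem. I would first record the standard identification $\End(A^n)\cong\Mat_n(\End(A))$: writing $\iota_j\colon A\to A^n$ and $\pi_i\colon A^n\to A$ for the canonical inclusions and projections, an endomorphism $\theta$ of $A^n$ is recovered from its entries $\theta_{ij}=\pi_i\circ\theta\circ\iota_j\in\End(A)$ via $\theta=\sum_{i,j}\iota_i\circ\theta_{ij}\circ\pi_j$, and conversely every matrix over $\End(A)$ defines an endomorphism of $A^n$. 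Under this identification the kernel of a matrix is exactly the kernel of the endomorphism it defines, so it is enough to produce an endomorphism $\theta$ of $A^n$ with $(\ker\theta)^0=H$; its matrix $(\theta_{ij})$ is then the one asked for.

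Since $H$ is a connected algebraic subgroup of the complete group variety $A^n$, it is an abelian subvariety, so $B:=A^n/H$ is again an abelian variety and the quotient map $p\colon A^n\to B$ is a surjective homomorphism with $\ker p=H$. Applying Poincar\'e reducibility inside $A^n$ gives an abelian subvariety $H'\subseteq A^n$ with $H+H'=A^n$ and $H\cap H'$ finite; then $\dim H'=\dim A^n-\dim H=\dim B$, so the restriction $\phi:=p|_{H'}\colon H'\to B$ is a surjective homomorphism between abelian varieties of equal dimension with finite kernel, i.e.\ an isogeny. Choose a complementary isogeny $q\colon B\to H'$, that is, one with $\phi\circ q=[m]_B$ for a suitable positive integer $m$; in particular $\ker q$ is finite. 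Now put
\[
  \theta \;:=\; \iota_{H'}\circ q\circ p \;\in\; \End(A^n),
\]
where $\iota_{H'}\colon H'\hookrightarrow A^n$ is the inclusion. As $\iota_{H'}$ is injective, $\theta(x)=e$ if and only if $p(x)\in\ker q$, so $\ker\theta=p^{-1}(\ker q)$; since $p$ is surjective with connected kernel $H$ and $\ker q$ is finite, $\ker\theta$ is a union of finitely many cosets of $H$, whence $(\ker\theta)^0=H$. Transporting $\theta$ back through the identification above yields the matrix in $\Mat_n(\End(A))$ demanded by the statement.

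I do not expect a genuine difficulty here: each ingredient — an abelian subvariety producing an abelian quotient, the identification $\End(A^n)\cong\Mat_n(\End(A))$, Poincar\'e reducibility, and the existence of a complementary isogeny — is classical. The only point to keep straight is that the naive ``projection of $A^n$ onto $H$ along $H'$'' is \emph{not} a morphism, because $A^n=H+H'$ is merely an isogeny decomposition and not a direct sum; this is exactly why the argument must route through $B=A^n/H$, pay the price of the auxiliary isogeny $q$, and hence recover $H$ only after passing to the identity component. Note, finally, that simplicity of $A$ plays no role in this particular statement — it intervenes elsewhere only to pin down the ring $\End(A)$.
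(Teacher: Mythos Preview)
Your argument is correct. The paper itself does not prove this statement: it is recorded as a Fact with a reference to \cite[Lemma~4.1(i)]{BHP18}, so there is no in-paper proof to compare against. Your route --- identify $\End(A^n)\cong\Mat_n(\End(A))$, pass to the quotient $B=A^n/H$, use Poincar\'e reducibility to obtain a complement $H'$, and compose the quotient map with a dual isogeny $q\colon B\to H'$ --- is the standard one and supplies precisely the details the paper elides. Your closing observation that simplicity of $A$ plays no role in this particular fact is also correct; simplicity enters only later, in Proposition~\ref{prop_free}.
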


\begin{prop}\label{prop_free}
  Assume that $A$ is simple. Let $V$ be an irreducible subvariety of $A^n$. Then the following are equivalent:
  \begin{enumerate}
    \item $V$ is free;
    \item for all generic $a$ of $V$ in $A'\succ_\LL A$ over $A$, for all $\sigma_1,\dots,\sigma_n\in \End(A)$, $\sigma_1 a_1\oplus \dots \oplus \sigma_n a_n\notin A$;
    \item there exists a generic $a$ of $V$ in $A'\succ_\LL A$ over $A$, such that for all $\sigma_1,\dots,\sigma_n\in \End(A)$, $\sigma_1 a_1\oplus \dots \oplus \sigma_n a_n\notin A$.
  \end{enumerate}
\end{prop}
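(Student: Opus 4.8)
The plan is to establish the cycle of implications $(1)\Rightarrow(2)\Rightarrow(3)\Rightarrow(1)$. Two small conventions should be fixed first. In $(2)$ and $(3)$ the tuple $(\sigma_1,\dots,\sigma_n)$ must be understood to be nonzero, since otherwise $\sigma_1 a_1\oplus\dots\oplus\sigma_n a_n=e\in A$ and the statements would be vacuous; and in the definition of freeness ``algebraic subgroup'' should be read as ``proper algebraic subgroup'', as $V$ is trivially contained in the translate of $A^n$ by itself.

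For $(1)\Rightarrow(2)$ I would argue contrapositively. Suppose that for some generic $a$ of $V$ over $A$ there are $\sigma_1,\dots,\sigma_n\in\End(A)$, not all zero, and $c\in A$ with $\sigma_1 a_1\oplus\dots\oplus\sigma_n a_n=c$. Let $\phi\colon A^n\to A$ be the homomorphism $x\mapsto\sigma_1 x_1\oplus\dots\oplus\sigma_n x_n$; it is $\LL$-definable over $k_0$, and it is nonzero since its restriction to the $j$-th factor is $\sigma_j$, which is nonzero for some $j$. This is where simplicity of $A$ enters: $\phi(A^n)$ is a nonzero connected algebraic subgroup of $A$, hence equals $A$, so $\ker\phi$ is a \emph{proper} algebraic subgroup of $A^n$. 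Picking $a_0\in A^n$ with $\phi(a_0)=c$ (possible since $\phi$ is surjective and $c\in A$), we have $\phi^{-1}(c)=a_0\oplus\ker\phi$, an $A$-definable translate of $\ker\phi$ containing $a$. Since $V$ is irreducible and $a$ is generic in $V$ over $A$, this forces $V\subseteq a_0\oplus\ker\phi$, so $V$ is not free.

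The implication $(2)\Rightarrow(3)$ only requires that a generic of $V$ over $A$ exists in some $A'\succ_\LL A$: as $V$ is irreducible, the partial type asserting membership in $V$ together with non-membership in every proper subvariety of $V$ defined over $A$ is consistent, and any realisation of a completion of it in a sufficiently saturated elementary extension works. For $(3)\Rightarrow(1)$ I would again argue contrapositively. Assume $V$ is not free, so $V\subseteq c\oplus H_0$ for some proper algebraic subgroup $H_0\leq A^n$. Choose $v_0\in V$, a point with coordinates in $K$, hence $v_0\in A^n$; a short computation ($v_0\ominus c\in H_0$) gives $V\subseteq v_0\oplus H_0$, and since $V$ is irreducible it lies in the single coset $v_0\oplus H$ of $H:=H_0^0$, which is connected and still proper. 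By Fact~\ref{fact_subgroups} there is $\theta\in\Mat_n(\End(A))$ with $H=(\ker\theta)^0$, and $\theta\neq0$ because $H$ is proper. As $a$ is a point of $V\subseteq v_0\oplus H$, we get $a\in v_0\oplus H(A')\subseteq v_0\oplus\ker\theta$, whence $\theta(a)=\theta(v_0)=:(d_1,\dots,d_n)\in A^n$. Choosing a row $i$ of $\theta$ that is nonzero and setting $\sigma_j:=\theta_{ij}$, we obtain $\sigma_1 a_1\oplus\dots\oplus\sigma_n a_n=d_i\in A$ with $(\sigma_1,\dots,\sigma_n)\neq0$, contradicting $(3)$.

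The main obstacle I expect is the direction $(3)\Rightarrow(1)$: the real content is to convert the geometric hypothesis ``$V$ sits in a translate of a proper subgroup'' into a concrete algebraic relation, and the mechanism is exactly the classification of connected algebraic subgroups of $A^n$ from Fact~\ref{fact_subgroups}, which turns ``$V$ lies in a coset of $(\ker\theta)^0$'' into ``$\theta$ is constant on $V$''. One must also take a little care to arrange that the relevant translation vector, and hence the constant $d_i$, lies in $A$ and not merely in an extension; choosing a $K$-point of $V$ as base point handles this. By comparison $(1)\Rightarrow(2)$ is soft, its only genuine input being simplicity of $A$, used precisely to see that a nonzero homomorphism $A^n\to A$ has proper kernel.
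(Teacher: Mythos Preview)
Your proof is correct and follows essentially the same route as the paper: contrapositive for $(1)\Rightarrow(2)$ via the kernel of $x\mapsto\bigoplus_i\sigma_i x_i$, existence of generics for $(2)\Rightarrow(3)$, and Fact~\ref{fact_subgroups} for $(3)\Rightarrow(1)$ to extract a linear relation from the coset containment. Your version is more careful on points the paper leaves implicit---the nonzero convention on $(\sigma_1,\dots,\sigma_n)$, properness of the subgroup, the explicit use of simplicity to ensure $\ker\phi$ is proper in $(1)\Rightarrow(2)$, and the choice of a $K$-point $v_0\in V$ in $(3)\Rightarrow(1)$ to guarantee the constant lands in $A$---but the argument is the same.
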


\begin{proof}
  For \textit{(1)}$\implies$ \textit{(2)}, take a generic $a$ of $V$ in $A'\succ A$. If there exists $\sigma_1,\dots,\sigma_n\in \End(A)$ such that $\sigma_1 a_1\oplus \dots \oplus \sigma_n a_n =c \in A$, then $V$ is included in $H\oplus d$ where $H$ is the algebraic subgroup of $A^n$ defined by the equation $\sigma_1 x_1\oplus \dots \oplus \sigma_n x_n = e$ and $d = (d_1,e,\dots,e)\in A^n$ with $d_1\in \sigma_1^{-1}(c)$, contradicting freeness. \textit{(2)}$\implies$\textit{(3)} is clear as generics always exists. \textit{(3)}$\implies $\textit{(1)}. From \textit{(3)}, we get that $V$ is not included in the set defined by equations of the form $\sigma_1 x_1\oplus \dots \oplus \sigma_n x_n = c$ for $\sigma_1,\dots,\sigma_n\in \End(A)$ and $c\in A$. Assume that $V$ is not free, i.e. there exists an algebraic subgroup $H$ of $A^n$ such that $V\subset H \oplus d$ for some tuple $d\in A^n$. By irreducibility of $V$, we may assume that $H$ is connected. By Fact~\ref{fact_subgroups}, $H$ is included in a set defined by equations of the form $\sigma_1x_1\oplus\dots\oplus \sigma_n x_n = e$, a contradiction.
\end{proof}

\section{Model-companion for the case $\End(A) = \Z$}

Let $T^A$ be the theory of $(A,\oplus,\ominus,e)$. Let $\LL^G = \LL\cup\set{G}$, where $G$ is a unary predicate, and consider the expansion $T^G$ of $T$ to the language $\LL^G$ when $G$ is predicate for a subgroup of $(A,\oplus,\ominus,e)$, model of $T^A$. We prove that if $\End(A) = \Z$, then $T^G$ admits a model-companion. 

We start by some easy facts on the model theory of the structure $(A,\oplus, \ominus,e)$. Let $[n] : A \rightarrow A$ be the endomorphism defined by $[n]a = a\oplus \dots \oplus a$ ($n$ times).

\begin{prop}\label{prop_abelianMT}
  The group $(A,\oplus,\ominus,e)$ is divisible abelian and for each $[n]$, we have that $\ker [n] =: A[n]$ is finite and isomorphic to $(\Z/n\Z)^{2g}$. The theory $T^A$ of $(A,\oplus,\ominus,e)$ has quantifier elimination in the language $\set{\oplus, \ominus, e}$, hence it is strongly minimal, the algebraic closure defines a modular pregeometry defined by $$\vect{B} = \set{c\in A \mid [n_0]c \oplus [n_1]b_1 \oplus \dots \oplus [n_k]b_k = e,\mbox{ for some $b_i\in B$ and $n_0\neq 0,n_i\in \Z$}}$$
  for some $B\subset A$. If $A[\infty]:= \bigcup_{n\in \N} A[n]$ is the set of torsion points, then $A/A[\infty]$ has the structure of a $\Q$-vector space.
\end{prop}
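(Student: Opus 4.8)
The plan is to proceed through the listed assertions in order, reducing everything to classical facts about abelian varieties over an algebraically closed field of characteristic $0$ together with standard model theory of modules/divisible groups. First, divisibility: since $K$ is algebraically closed of characteristic $0$, the multiplication-by-$n$ morphism $[n]\colon A\to A$ is a surjective isogeny (it is a nonconstant morphism of the projective variety $A$ onto itself, or one invokes that isogenies between abelian varieties over an algebraically closed field are surjective), so $(A(K),\oplus)$ is a divisible abelian group. For the kernel, $A[n]=\ker[n]$ is the fiber over $e$ of an isogeny of degree $n^{2g}$; since $\mathrm{char}(K)=0$ the isogeny $[n]$ is separable, so $\abs{A[n]}=n^{2g}$, and the elementary divisor structure of $A[n]$ as an abelian group killed by $n$, together with the fact that $A[m]\subseteq A[n]$ has the right size for all $m\mid n$, forces $A[n]\cong(\Z/n\Z)^{2g}$ (this is the standard computation, see e.g.\ \cite[Chapter 5]{B98} or Mumford's \emph{Abelian Varieties}).

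Next I would treat quantifier elimination for $T^A$ in the language $\set{\oplus,\ominus,e}$. The group $(A(K),\oplus,\ominus,e)$ is a divisible abelian group all of whose $n$-torsion subgroups are finite of fixed size $n^{2g}$; such a group is, up to this finite torsion data, a $\Q$-vector space, and the theory of a divisible abelian group with prescribed (finite, hence $\emptyset$-definable) torsion admits quantifier elimination — this is a routine back-and-forth, or a direct appeal to the general elimination of quantifiers for modules combined with the observation that every pp-formula in one free variable is, modulo the theory, equivalent to a Boolean combination of the divisibility/torsion conditions $[n]x=t$ for torsion points $t\in A[\infty]$, each of which is already quantifier-free in $\LL$. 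From quantifier elimination it follows immediately that every definable subset of $A$ in one variable is finite or cofinite, so $T^A$ is strongly minimal; and the pregeometry is $\acl$, whose description as $\vect{B}$ is exactly the solution set of the pp-formulas just mentioned. Modularity of this pregeometry is the standard fact that the lattice of $\acl$-closed sets in a (divisible) abelian group / module is modular: for $\acl$-closed $X,Y$ one has $\acl(X\cup Y)=X\oplus Y$ modulo torsion, giving the modular law.

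Finally, that $A/A[\infty]$ carries a $\Q$-vector space structure is immediate from the first assertions: $A$ is divisible and $A[\infty]$ is precisely its torsion subgroup, so the quotient is torsion-free divisible, i.e.\ a $\Q$-vector space. The main obstacle — to the extent there is one — is pinning down the quantifier-elimination step cleanly: one must be careful that the \emph{only} pp-definable subgroups of $A$ are the subgroups $[n]A=A$ (by divisibility) and the cosets $\{x : [n]x = t\}$ for $t\in A[\infty]$, and that no genuinely new unary definable set is produced by a quantifier over several sorts/powers; this is where one leans on $\End(A)=\Z$ being irrelevant (we are only in the pure group language here) and on the classical structure theory of $A[n]$ established above. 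I expect the remaining verifications (strong minimality, modularity, the $\Q$-vector space structure) to follow formally once QE is in hand.
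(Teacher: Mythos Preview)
Your proposal is correct and follows essentially the same approach as the paper, which simply cites \cite[Chapter 5]{B98} for the group-theoretic facts and declares quantifier elimination an ``easy exercise'' from which the rest follows. If anything you give considerably more detail than the paper does; the one cosmetic slip is that modules have pp-elimination (Baur--Monk) rather than full QE, but you immediately supply the missing step---divisibility forces every pp-formula to reduce to a conjunction of linear equations---so the argument goes through.
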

\begin{proof}
  The first assertion is standard (see for instance \cite[Chapter 5]{B98}), and provides an axiomatisation of $T^A$. Quantifier elimination for $T^A$ is an easy exercise, and the rest of the proposition follows.
\end{proof}
\begin{rk}
  If the dimension of $A$ is one, then $T$ is also strongly minimal, hence there are two ambient pregeometries\footnote{In that case, the fact that $T^A$ is pregeometric follows also from the fact that a reduct of a pregeometric theory is always pregeometric~\cite[Fait 2.15]{Hi08}.}, the first one is the one in the sense of $T$ and the second one is in the sens of $T^A$. If the pregeometry in $T^A$ is modular, the one of $T$ never is. In this section when we consider independent tuples in the sense of the pregeomety in $A$, it will always mean in the sense of the modular one, i.e. in the pregeometry of the reduct $T^A$ of $T$. In this note, a tuple $a$ is independent over some subset $B$ if it is of maximal dimension over $B$ (in the sense of the pregeometry in models of $T^A$).
\end{rk}

The following lemma is not used in the proof of Proposition~\ref{prop_mc}, but it is worth mentionning.
\begin{lm}\label{lm_defsg}
  The following are equivalent.
  \begin{enumerate}
    \item $\End(A) = \Z$;
    \item every connected algebraic subgroup $H$ of $A^n$ is the connected component of a group defined by a formula of the form $\bigwedge_i [m_{i,1}] x_1 \oplus \dots \oplus [m_{i,n}]x_n = e$, for some $(m_{i,j})\in \Mat_{n}(\Z)$.
    \item every algebraic subgroup $H$ of $A^n$ is definable in the structure $(A,\oplus, \ominus, e)$;
  \end{enumerate}
\end{lm}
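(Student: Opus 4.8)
The plan is to establish the cycle of implications $(1)\Rightarrow(2)\Rightarrow(3)\Rightarrow(1)$. The first implication is immediate from Fact~\ref{fact_subgroups}: if $\End(A)=\Z$ and $H\le A^n$ is a connected algebraic subgroup, then $H=(\ker\theta)^0$ for some $\theta=(m_{i,j})\in\Mat_n(\End(A))=\Mat_n(\Z)$, and writing the equation $\theta x=e$ coordinate by coordinate shows that $\ker\theta$ is precisely the subgroup of $A^n$ defined by $\bigwedge_i [m_{i,1}]x_1\oplus\dots\oplus[m_{i,n}]x_n=e$, which is $(2)$.

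For $(2)\Rightarrow(3)$, let $H$ be an arbitrary algebraic subgroup of $A^n$ with identity component $H^0$. By $(2)$, $H^0=(\ker\theta)^0$ where $\ker\theta$ is cut out by a formula of the stated form, so $\ker\theta$ is $\emptyset$-definable in $(A,\oplus,\ominus,e)$; the point is then to get $H^0$ itself definable there. Since $H^0$ is a connected algebraic subgroup of the abelian variety $A^n$ it is an abelian subvariety, hence divisible, so $[N]H^0=H^0$ for all $N\ge1$; taking $N$ to be the order of the finite group $\ker\theta/(\ker\theta)^0$ gives also $[N](\ker\theta)\subseteq(\ker\theta)^0$, so $H^0=(\ker\theta)^0=[N](\ker\theta)$ is $\emptyset$-definable in $(A,\oplus,\ominus,e)$. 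Finally $H^0$ has finite index in $H$, so $H=\bigcup_{j=1}^r(h_j\oplus H^0)$ for suitable $h_j\in A^n$, and $H$ is definable with the parameters $h_1,\dots,h_r$.

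For $(3)\Rightarrow(1)$, recall from Proposition~\ref{prop_abelianMT} that $(A,\oplus,\ominus,e)$ is strongly minimal, so every definable subgroup of $A$ is finite or all of $A$; with $(3)$ this shows that every nonzero $\beta\in\End(A)$ has finite kernel, i.e.\ is an isogeny. Fix $\alpha\in\End(A)$. Its graph $\Gamma_\alpha=\{(x,\alpha x):x\in A\}$ is a connected algebraic subgroup of $A^2$, hence by $(3)$ it is definable in $(A,\oplus,\ominus,e)$ over some parameter set $B$. Take $a$ generic over $B$ (of maximal dimension $1$ over $B$ in the pregeometry of $T^A$). As $\{y:(a,y)\in\Gamma_\alpha\}=\{\alpha a\}$ is a singleton definable over $Ba$, we get $\alpha a\in\acl^{T^A}(Ba)=\vect{Ba}$, and the description of $\vect{\cdot}$ in Proposition~\ref{prop_abelianMT} yields $n_0\in\Z\setminus\{0\}$ and $m\in\Z$ such that the endomorphism $\beta\in\End(A)$ given by $\beta(x)=[n_0]\alpha(x)\oplus[m]x$ satisfies $\beta(a)\in\vect{B}=\acl^{T^A}(B)$. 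If $\beta\ne0$ it is an isogeny, so $a\in\acl^{T^A}(B,\beta(a))=\acl^{T^A}(B)$, contradicting the choice of $a$; hence $\beta=0$, i.e.\ $[n_0]\alpha=[-m]$ in $\End(A)$. As $\End(A)$ is torsion-free (a nonzero $\gamma$ with $[d]\gamma=0$ has $\mathrm{im}(\gamma)\subseteq A[d]$ finite, hence trivial since $\mathrm{im}(\gamma)$ is connected, so $\gamma=0$), after cancelling a common factor and adjusting signs we may assume $[q]\alpha=[p]$ with $\gcd(p,q)=1$ and $q\ge1$. If $q>1$, then for every $t\in A[q]$ we have $[p]t=[q](\alpha t)=\alpha([q]t)=\alpha(e)=e$, so $[p]$ annihilates $A[q]\cong(\Z/q\Z)^{2g}$; but $[p]$ acts on $A[q]$ as multiplication by $p$, which is nonzero because $\gcd(p,q)=1$ — a contradiction. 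Hence $q=1$ and $\alpha=[p]\in\Z$, proving $\End(A)=\Z$.

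I expect the main obstacle to be $(3)\Rightarrow(1)$, and within it the final step: the genericity argument only shows that $\alpha$ becomes an integer multiplication after composing with some nonzero $[n_0]$, i.e.\ that $\End(A)\otimes\Q=\Q$, and one still has to rule out denominators. The extra input for that is the faithful action of $\End(A)$ on torsion coming from $A[n]\cong(\Z/n\Z)^{2g}$ in Proposition~\ref{prop_abelianMT}.
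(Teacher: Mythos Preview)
Your proof is correct. The implications $(1)\Rightarrow(2)$ and $(2)\Rightarrow(3)$ match the paper's argument essentially verbatim (the paper also observes $H^0=[k](\ker\theta)$ for a suitable $k$).

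For $(3)\Rightarrow(1)$ the two arguments diverge. The paper argues by contradiction: it quotes the structural fact that $\End(A)$ is a free $\Z$-module of finite rank, so if $\End(A)\neq\Z$ one can choose $\theta$ with $\theta,\Id$ $\Z$-linearly independent; the genericity argument then yields $[n]\theta=[m]$ for some $n\neq 0$, an immediate contradiction. You instead show directly that every $\alpha\in\End(A)$ lies in $\Z$: your genericity argument (which you run more carefully than the paper, keeping track of the parameter set $B$) only gives $[n_0]\alpha=[-m]$, and you then eliminate the denominator using torsion-freeness of $\End(A)$ together with the description of $A[q]$ from Proposition~\ref{prop_abelianMT}. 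Your route is a little longer but more self-contained, since it avoids importing the structure theorem for $\End(A)$; the paper's route is quicker precisely because it invokes that external input. One small point worth making explicit: the step ``if $\beta\neq 0$ it is an isogeny, so $a\in\acl^{T^A}(B,\beta(a))$'' uses not merely that $\ker\beta$ is finite, but that $\Gamma_\beta$ is definable over $B$ in the pure group structure---which holds because $\beta=[n_0]\alpha\oplus[m]$ and $\Gamma_\alpha$ is $B$-definable by assumption.
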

\begin{proof}
  \textit{(1)}$\implies$\textit{(2)}. Assume that $\End(A) = \Z$, and let $H$ be a connected algebraic subgroup of $A^n$. By Fact~\ref{fact_subgroups}, $H = (\ker(\theta))^0$.  By assumption, there exists $(m_{i,j})\in \Mat_{n,n}(\Z)$ such that $\ker(\theta) $ is the set of solutions of the formula $\bigwedge_i [m_{i,1}] x_1 \oplus \dots \oplus [m_{i,n}]x_n = e$, hence it is of the desired form. \textit{(2)}$\implies $\textit{(3)} Let $H$ be an algebraic subgroup of $A^n$. Then the connected component $H^0$ is of finite index in $H$, so it is sufficient to prove that $H^0$ is definable in $(A,\oplus, \ominus, e)$. From $\textit{(2)}$, $H^0$ is the connected component of the group $G$ defined by the formula $\bigwedge_i [m_{i,1}] x_1 \oplus \dots \oplus [m_{i,n}]x_n = e$. Now $H^0$ has finite index in $G$ hence there exists $k\in \N$ such that $H^0 = [k]G$, so $H^0$ is definable in the structure $(A,\oplus, \ominus, e)$. \textit{(3)}$\implies$\textit{(1)}. Assume that $\End(A)\neq \Z$, then from~\cite[Chapter 5]{B98}, as a $\Z$-module, $\End(A)$ is isomorphic to $\Z^r$ with $r>1$, hence there exists $\theta\in \End(A)$ such that $\theta$ and $\Id$ are $\Z$-linearly independent. Now the graph of $\theta$ is an algebraic subgroup of $A\times A$, assume that it is definable in the structure $(A,\oplus,\ominus,e)$. Let $a$ be a generic of $A$, then $\theta a$ is algebraic over $a$ in the sense of $(A,\oplus,\ominus,e)$. From Proposition~\ref{prop_abelianMT}, there exists $n,m\in \Z$ such that $[n]\theta a = [m]a$, hence $([n]\theta - [m]) a = e$. By genericity of $a$, we get $n \theta- m\Id = [0]$, a contradiction.
\end{proof}

The following is a mere translation of Proposition~\ref{prop_free}.
\begin{prop}\label{prop_free2}
  Assume that $A$ is an abelian variety with $\End(A) = \Z$. Let $V\subseteq A^n$ be an irreducible subvariety. Then the following are equivalent:
  \begin{enumerate}
    \item $V$ is free;
    \item for all generic $a$ of $V$ in $A'\succ_\LL A$, $a$ is independent over $A$ in the sense of the pregeometry of $(A,\oplus,\ominus,e)$;
    \item there exists a generic $a$ of $V$ in some $A'\succ_\LL A$, such that $a$ is independent over $A$ in the sense of the pregeometry of $(A,\oplus,\ominus,e)$.
  \end{enumerate}
\end{prop}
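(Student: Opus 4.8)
The plan is to deduce Proposition~\ref{prop_free2} directly from Proposition~\ref{prop_free}, once the two notions of ``genericity/independence'' appearing in the two statements are identified. Since $\End(A) = \Z$, the ring of ($\LL$-definable) endomorphisms of $A$ consists exactly of the multiplication maps $[m]$ for $m \in \Z$. Hence, for an irreducible $V \subseteq A^n$, conditions \textit{(2)} and \textit{(3)} of Proposition~\ref{prop_free} say precisely that for all (resp.\ for some) generic $a$ of $V$ over $A$, and for every $(m_1, \dots, m_n) \in \Z^n \setminus \set{0}$, one has $[m_1]a_1 \oplus \cdots \oplus [m_n]a_n \notin A$. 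So it suffices to check that, for a tuple $a$ in some $A' \succ_\LL A$, this property is equivalent to $a$ being independent over $A$ in the pregeometry of $T^A$.

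For that, I would invoke Proposition~\ref{prop_abelianMT}: the closure operator of the pregeometry on models of $T^A$ is $B \mapsto \vect{B}$, and since $A$ is a divisible model of $T^A$ we have $\vect{A} = A$ and, more generally, $\vect{A \cup \set{a_1, \dots, a_{j-1}}}$ is the set of $c$ such that $[n_0]c \oplus [n_1]a_1 \oplus \cdots \oplus [n_{j-1}]a_{j-1} \in A$ for some $n_0 \neq 0$ and $n_i \in \Z$. Thus $a$ has maximal dimension $n$ over $A$, i.e.\ is independent over $A$, if and only if $a_j \notin \vect{A \cup \set{a_1, \dots, a_{j-1}}}$ for each $j \leq n$; and unwinding this triangular system of non-relations is equivalent, by the exchange property, to the symmetric statement that no nontrivial $\Z$-combination $[m_1]a_1 \oplus \cdots \oplus [m_n]a_n$ lies in $A$. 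Plugging this back into the reformulation of Proposition~\ref{prop_free} above yields \textit{(1)}$\Leftrightarrow$\textit{(2)}$\Leftrightarrow$\textit{(3)}.

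I do not expect a genuine obstacle here; the only points requiring a little care are the passage between the ordered (``triangular'') characterisation of independence in a pregeometry and the unordered linear-algebra statement about $\Z$-combinations of the $a_i$, and the harmless convention---already implicit in Proposition~\ref{prop_free}---that the tuple of endomorphisms (equivalently, of integers $m_i$) is taken to be nonzero. Everything else is bookkeeping with the quantifier elimination of $T^A$ from Proposition~\ref{prop_abelianMT}.
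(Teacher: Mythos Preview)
Your proposal is correct and is exactly the paper's approach: the paper states only that Proposition~\ref{prop_free2} is ``a mere translation of Proposition~\ref{prop_free}'', and you have spelled out precisely that translation---identifying $\End(A)=\Z$ with the maps $[m]$ and matching the no-nontrivial-$\Z$-relation condition to independence in the pregeometry of $T^A$ via Proposition~\ref{prop_abelianMT}. If anything, you have supplied more detail than the paper does.
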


  An \emph{irreducible quasi-variety} is a subset of $A^n$ of the form $V\cap \OO$ where $V$ is an irreducible variety and $\OO$ is a Zariski open set. The Zariski closure of $V\cap \OO$ is $V$, and any generic of the variety $V$ is in $V\cap \OO$.

\begin{rk}\label{rk_quasi_def}
  Given any $\LLr$-formula $\phi(x,y)$ and field $K$, the set of tuples $b$ from $K$ such that the Zariski closure of $\phi(x,b)$ is irreducible, is definable~\cite[Theorem 10.2.1 (2)]{Joh16}. This implies that if a formula $\phi(x,b)$ defines an irreducible quasi-variety in a field $K$, there is a formula $\psi(x,y)$ such that $\phi(K,b) = \psi(K,b)$ and such that for any $b'$ in any field $K'\succ K$, $\psi(K',b')$ is an irreducible quasi-variety. Of course, this holds replacing $\LLr, K,K'$ by $\LL, A,A'$.
\end{rk}

\begin{df}
  We say that an $\LL$-formula $\psi(x,y)$ \emph{defines irreducible (quasi-)varieties in $x$} if for any $A\models T$ and tuple $b$ from $A$, the formula $\psi(x,b)$ defines an irreducible (quasi-)variety.
\end{df}

The following fact was observed in the proof of~\cite[Theorem 1.2]{BGH13}.

\begin{fact}\label{fact_free}
  Let $\phi(x,y)$ be an $\LL$-formula that defines irreducible varieties in $A^n$ in $x$. Then $\set{b \mid \phi(x,b) \text{ is free}}$ is $\LL$-definable.
\end{fact}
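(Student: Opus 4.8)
The plan is to turn freeness of $\phi(x,b)$ into a first-order condition on $b$ by passing to the algebraic subgroup generated by the set of differences of the variety. Fix $A\models T$ (with ambient field $K$) and a tuple $b$ from $A$; by hypothesis $V_b:=\phi(A,b)$ is an irreducible subvariety of $A^n$. Put
\[
  D_b \;:=\; \{\, a\ominus a' \;:\; a,a'\in V_b\,\}\;\subseteq\; A^n,
\]
the image of the irreducible variety $V_b\times V_b$ under the difference morphism; it is a symmetric constructible set containing $e$, whose Zariski closure $\overline{D_b}$ is irreducible and contains a dense open subset of itself. The first step is the elementary observation that $V_b$ is \emph{not} free if and only if the smallest algebraic subgroup $H_b$ of $A^n$ containing $D_b$ is proper: if $V_b\subseteq d\oplus H$ with $H$ a proper algebraic subgroup, then $a\ominus a'\in H$ for all $a,a'\in V_b$, so $H_b\le H$; conversely, taking $d\in V_b$ gives $\ominus d\oplus V_b\subseteq D_b\subseteq H_b$, i.e. $V_b\subseteq d\oplus H_b$.

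The second step uses the standard structure theory of subgroups generated by irreducible subvarieties of an algebraic group: since $\overline{D_b}$ is irreducible, symmetric and contains the identity, $H_b$ is a \emph{connected} algebraic subgroup and $H_b=\overline{D_b}^{\,\oplus m}$ (the $m$-fold sum) for every $m\ge 2\dim(H_b)$; in particular this holds for the fixed bound $m:=2n\dim(A)$, which is $\ge 2\dim(A^n)\ge 2\dim(H_b)$ uniformly in $b$. Combining this with the first step, and with the inclusions $\overline{D_b}^{\,\oplus m}\subseteq\overline{D_b^{\oplus m}}\subseteq H_b$ (the first by continuity of the sum map, the second since $H_b$ is closed), one gets: $V_b$ is free $\iff H_b=A^n\iff D_b^{\oplus m}$ is Zariski dense in $A^n$.

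The last step makes Zariski density first-order. A dense constructible subset $E$ of the irreducible group $A^n$ contains a dense open $U$, and $U\oplus U=A^n$ (for $g\in A^n$, the dense opens $U$ and $g\ominus U$ must meet, giving $g=u_1\oplus u_2$ with $u_i\in U$). Hence $D_b^{\oplus m}$ is dense iff $D_b^{\oplus 2m}=A^n$, and the latter is expressed by the $\LL$-formula in $b$ of fixed length
\[
  \forall w\;\exists z_1\cdots\exists z_{2m}\;\Bigl(\textstyle\bigwedge_{i=1}^{2m}\exists x_i\,\exists x_i'\,\bigl(\phi(x_i,b)\wedge\phi(x_i',b)\wedge z_i=x_i\ominus x_i'\bigr)\ \wedge\ w=z_1\oplus\cdots\oplus z_{2m}\Bigr),
\]
where $w,z_i,x_i,x_i'$ range over $A^n$. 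Therefore $\{\,b\mid \phi(x,b)\text{ is not free}\,\}$ is $\LL$-definable, and so is its complement.

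The only non-bookkeeping ingredient is the uniform bound $m$ in the structure theorem for generated subgroups, together with the density-implies-surjectivity lemma used to eliminate the Zariski closure from the condition; note that, conveniently, neither $\End(A)=\Z$ nor even simplicity of $A$ is used here, matching the generality of the statement. One could instead route the argument through Proposition~\ref{prop_free}, but converting its ``for some/all generic $a$'' clause and bounding the endomorphisms involved into a single formula is less transparent than the difference-set computation above.
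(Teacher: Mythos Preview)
The paper does not actually prove this statement: it is recorded as a Fact, attributed to the proof of \cite[Theorem~1.2]{BGH13}, with no argument given in the present paper. Your proof is correct and supplies a self-contained justification. The strategy---passing to the difference set $D_b=V_b\ominus V_b$, identifying the smallest algebraic subgroup containing it with a bounded iterated sumset, and then eliminating the Zariski closure via the $U\oplus U=A^n$ trick for dense constructible $U$---is a clean and standard way to make freeness first-order uniformly in the parameter, and indeed uses neither simplicity of $A$ nor $\End(A)=\Z$.

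Two minor remarks. First, your concluding sentence says that $\{b\mid\phi(x,b)\text{ is \emph{not} free}\}$ is definable, but the displayed formula expresses $D_b^{\oplus 2m}=A^n$, i.e.\ that $V_b$ \emph{is} free; this is of course harmless. Second, the exact equality $H_b=\overline{D_b}^{\,\oplus m}$ (rather than mere density) for $m\ge 2\dim(H_b)$ is justified by precisely the argument you deploy one paragraph later: once $\overline{D_b}^{\,\oplus k_0}$ is dense in $H_b$ for some $k_0\le\dim(H_b)$, it contains a dense open $U$, and then $\overline{D_b}^{\,\oplus 2k_0}\supseteq U\oplus U=H_b$. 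It would read more smoothly to invoke this once, at the earlier point. Alternatively, since $A$ is complete, $D_b$ and all its iterated sumsets are already Zariski closed, which lets you drop the closures throughout and shortens the bookkeeping.
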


\begin{prop}\label{prop_H4}
  Assume that $\End(A) = \Z$. Let $\phi(x,y)$ be an $\LL$-formula that defines irreducible quasi-varieties in $x$. Then there exists an $\LL$-formula $\theta(y) = \theta_\phi(y)$ such that for all $B\models T$, for all $\abs{y}$-tuple $b$ from $B$, we have $B\models \theta(b)$ if and only if there exists $A'\succ_\LL B$ and a $\abs{x}$-tuple $a$ from $A'$ such that $a$ is independent over $B$, in the sense of the pregeometry of $(A',\oplus,\ominus,e)$.
\end{prop}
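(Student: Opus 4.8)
The plan is to take $\theta(y)$ to be, up to $T$-equivalence, an $\LL$-formula expressing that $\phi(x,b)$ is free, and to show that freeness of $\phi(x,b)$ is equivalent to $\phi(x,b)$ being realized, in some elementary extension of $B$, by a tuple independent over $B$; the key input is Proposition~\ref{prop_free2}. For the formula itself: write $n$ for the length of $x$ and, for $b$ in a model of $T$, let $V_b$ be the Zariski closure of $\phi(x,b)$, an irreducible subvariety of $A^n$ by hypothesis on $\phi$. Since translates of algebraic subgroups of $A^n$ are Zariski closed, $\phi(x,b)$ is free if and only if $V_b$ is. By the uniform definability of Zariski closures (cf.\ Remark~\ref{rk_quasi_def}) the family $(V_b)_b$ is defined by a single $\LL$-formula, which defines irreducible varieties in $x$, and applying Fact~\ref{fact_free} to it produces an $\LL$-formula $\theta(y)$ holding of $b$, in every model of $T$, exactly when $\phi(x,b)$ is free.

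Now suppose $B\models T$, $b$ is a tuple from $B$, and $B\models\theta(b)$, so $V_b$ is free; pick $A'\succ_\LL B$ containing a generic $a$ of $V_b$ over $B$ (generics exist). A generic of $V_b$ lies in every dense open subset of $V_b$, hence $a\in\phi(x,b)$, i.e.\ $A'\models\phi(a,b)$. Applying Proposition~\ref{prop_free2} with $B$ in the role of the ambient model and $V_b$ in that of $V$, freeness of $V_b$ forces $a$ to be independent over $B$ in the sense of the pregeometry of $(A',\oplus,\ominus,e)$. Thus $A'$ and $a$ witness the right-hand side.

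Conversely, suppose $A'\succ_\LL B$ and a tuple $a$ from $A'$ satisfy $A'\models\phi(a,b)$ with $a$ independent over $B$, and assume towards a contradiction that $V_b$ is not free. Then $V_b$ is contained in a coset of a proper algebraic subgroup of $A^n$, which we may take connected since $V_b$ is irreducible; by Fact~\ref{fact_subgroups} and $\End(A)=\Z$ this subgroup equals $(\ker M)^0$ for some $M\in\Mat_n(\Z)$, which is nonzero because the subgroup is proper. A nonzero row of $M$ yields integers $m_1,\dots,m_n$, not all zero, such that $[m_1]v_1\oplus\dots\oplus[m_n]v_n$ is constant --- say equal to $c$ --- as $v$ ranges over $V_b$, because $V_b$ lies in a coset of $\ker M$. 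Being a nonempty $B$-definable set, $V_b$ contains a point of $B$; evaluating the previous expression there gives $c\in B$. Since $a\in V_b$ we get $[m_1]a_1\oplus\dots\oplus[m_n]a_n=c\in B$; choosing $j$ with $m_j\neq 0$, the finiteness of $\ker[m_j]$ (Proposition~\ref{prop_abelianMT}) makes $a_j$ algebraic over $B\cup\set{a_i : i\neq j}$ in $(A',\oplus,\ominus,e)$, so the pregeometric dimension of $a$ over $B$ is at most $n-1$, contradicting the independence of $a$ over $B$. Hence $V_b$ is free and $B\models\theta(b)$.

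The delicate part is this converse direction, where one must witness the pregeometric dependence of $a$ \emph{over $B$ itself}: this is exactly where the hypothesis $\End(A)=\Z$ is needed, for by Fact~\ref{fact_subgroups} (equivalently Lemma~\ref{lm_defsg}) the algebraic subgroup containing $V_b$ is then cut out over $\emptyset$ by integer-multiplication equations, so the constant $c$ lies in $B$ because every nonempty $B$-definable set has a point in $B$.
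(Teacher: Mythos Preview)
Your proof is correct and follows essentially the same approach as the paper: both pass to the Zariski closure $V_b$ via uniform definability, take $\theta$ from Fact~\ref{fact_free}, and handle the forward direction identically by picking a generic and invoking Proposition~\ref{prop_free2}. The only difference is cosmetic: for the converse the paper simply cites Proposition~\ref{prop_free2} (implicitly using that its implication $(3)\Rightarrow(1)$ does not actually need genericity of the witnessing point), whereas you unpack that implication directly via Fact~\ref{fact_subgroups} and the observation that the constant $c$ lies in $B$---which is exactly the content of that implication applied over the base $B$.
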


\begin{proof}
  Let $\phi(x,y)$ be as in the statement. From~\cite[Theorem 10.2.1 (1)]{Joh16}, there exists a formula $\psi(x,y)$ such that for all $b$, $\psi(x,b)$ is the Zariski closure of $\phi(x,b)$. Let $\theta(y)$ be the formula whose existence is stated in Fact~\ref{fact_free}. Let $B$ be a model of $T$ and $b$ a tuple from $B$ such that $B\models\theta(b)$. Then $\psi(x,b)$ defines a free irreducible variety. Let $a$ be a tuple from some elementary extension $A'$ of $B$, that is a generic of $\psi(x,b)$. By Proposition~\ref{prop_free2}, $a$ is independent over $B$ in the sense of the pregeometry $(A',\oplus,\ominus,e)$. Further, as $a$ is generic, such $a$ always satisfies $\phi(x,b)$. Conversely if $a$ is in some elementary extension $A'$ of $B$ such that $\phi(a,b)$ and $a$ is independent over $B$, then a generic of $\psi(x,b)$ is independent over $B$ hence $\psi(x,b)$ defines a free irreducible variety by Proposition~\ref{prop_free2}, hence $B\models \theta(b)$. 
\end{proof}

The result of Proposition~\ref{prop_H4} is, in general, the hard step in order to axiomatise existentially closed models of $T^G$, it corresponds to hypothesis $(H_4)$ in \cite{dE18A}. We gather now the data we have collected concerning the theory $T$ and its reduct $T^A$, when $\End(A) = \Z$:
\begin{enumerate}
  \item $T$ has quantifier elimination in $\LL$;
  \item $T^A$ has quantifier elimination in $\set{\oplus,\ominus,e}$, is strongly minimal, and the algebraic closure defines a modular pregeometry;
  \item Proposition~\ref{prop_H4}.
\end{enumerate}
  
Then, using~\cite[Theorem 1.5, Remark 1.8]{dE18A}, we get
\begin{prop} \label{prop_mc}
    Suppose that $\End(A) = \Z$. Then $T^G$ admits a model-companion, we call it $TG$.   
  \end{prop}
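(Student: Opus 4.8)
The plan is to apply the general axiomatisation machinery of~\cite[Theorem 1.5, Remark 1.8]{dE18A} to the pair $(T, T^A)$, verifying that all the hypotheses of that framework are met. Recall that the set-up in~\cite{dE18A} concerns a theory $T$ with quantifier elimination together with a reduct $T^A$ (here obtained by forgetting the subvariety predicates and keeping only the group structure) to which one adjoins a predicate $G$ for a substructure that is a model of the reduct; the theorem there gives an explicit axiomatisation of the existentially closed models of $T^G$, provided hypotheses $(H_1)$--$(H_4)$ (or the variants discussed in Remark~1.8) hold. So the proof is essentially a checklist.

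First I would record that $(H_1)$ holds: $T$ eliminates quantifiers in $\LL$, which is exactly point (1) in the list preceding the proposition and is part of the standing hypotheses on $A(K)$ recalled in Section~1. Next, for the hypotheses concerning the reduct $T^A$, I would invoke Proposition~\ref{prop_abelianMT}: $T^A$ eliminates quantifiers in $\set{\oplus,\ominus,e}$, is strongly minimal, and its algebraic closure is a modular pregeometry; this is point (2) and supplies the structural hypotheses (the ``geometric'' hypotheses $(H_2)$, $(H_3)$, including the requirement that the reduct be pregeometric with a well-behaved, here modular, closure operator, and that $\acl$ in $T$ restrict compatibly). The divisibility of $(A,\oplus,\ominus,e)$ and the finiteness of the torsion subgroups $A[n]\cong(\Z/n\Z)^{2g}$, also from Proposition~\ref{prop_abelianMT}, are what make $G$-closed substructures behave well. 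Finally, the key hypothesis $(H_4)$ — the one the paper flags as the genuinely hard input — is precisely the content of Proposition~\ref{prop_H4}, whose proof uses $\End(A)=\Z$ via Proposition~\ref{prop_free2} and the definability-of-freeness fact (Fact~\ref{fact_free}); this is where the algebraic hypothesis $\End(A)=\Z$ is actually consumed.

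Having checked $(H_1)$--$(H_4)$, I would then simply quote~\cite[Theorem 1.5]{dE18A} to conclude that the explicit axiom scheme produced there axiomatises the class of existentially closed models of $T^G$, and~\cite[Remark 1.8]{dE18A} to conclude that this axiomatisation is first-order, i.e. that the existentially closed models form an elementary class; equivalently, $T^G$ admits a model-companion, which we name $TG$. One point worth a sentence is that the predicate $G$ is required to name a model of $T^A$ (a divisible subgroup with the full torsion), not merely an arbitrary subgroup — this is built into the definition of $T^G$ in the paragraph opening Section~2, and it is what guarantees the amalgamation needed for the model-companion to exist; I would note this explicitly so the reader sees the genericity condition is about the subgroup, matching the abstract's phrasing ``generic divisible subgroup with the same torsion''.

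The main obstacle in this proof is not in the present proposition at all but has already been discharged: it is the verification of $(H_4)$ in Proposition~\ref{prop_H4}, which is why the excerpt singles it out as ``in general, the hard step''. Once that is in hand, the proof of Proposition~\ref{prop_mc} is a routine application of the black box of~\cite{dE18A}; the only care needed is to match the notational conventions (in particular, that ``independent'' throughout means independent in the modular pregeometry of the reduct $T^A$, as stressed in the Remark following Proposition~\ref{prop_abelianMT}, so that the $(H_4)$ statement lines up verbatim with what the general theorem demands).
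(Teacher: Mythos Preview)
Your proposal is correct and matches the paper's primary argument exactly: the paper lists the same three ingredients (quantifier elimination for $T$; strong minimality, quantifier elimination and modularity for $T^A$; and Proposition~\ref{prop_H4} as hypothesis $(H_4)$) and then invokes \cite[Theorem 1.5, Remark 1.8]{dE18A} to conclude. The only addition in the paper is that, ``for the sake of completeness'', it also spells out a direct self-contained proof in two steps (consistency of the axiom scheme, then existential closedness), but this is supplementary rather than a different approach.
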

 \begin{rk}\label{rk_ex}
  Note that the condition $\End(A) = \Z$ can fail even for elliptic curves. For instance, consider the elliptic curve $E$ in $\C$ defined by the equation $y^2 = x^3 - x$. Then $[i] : (x,y) \mapsto (-x,iy)$ is an endomorphism of $E$, and we have $\End(E) = \Z[i]$ \cite[Example 4.4]{Sil09}. In positive characteristic, $\End(E)$ is always bigger than $\Z$ \cite[Theorem 3.1]{Sil09}. However, in characteristic $0$, there exists abelian varieties with $\End(A) = \Z$ of any genus, and those are the generic ones~\cite[Section 4.7]{Ara12}.
\end{rk}

We further provide a set of axioms for $TG$. Assume that $a,b$ are tuples of elements of $A$ such that $a$ is linearly independent over $b$. Then $u\in \vect{ab}\setminus \vect{b}$ if and only if there exists $n_0\neq 0$, $n_1,\dots,n_{\abs{a}}$ not all zero and $m_1,\dots,m_{\abs{b}}$ such that $$[n_0]u \oplus \bigoplus_{i=1}^{\abs{a}} [n_i]a_i \oplus \bigoplus_{j=1}^{\abs{b}} [m_j]b_j=e.$$
Let $\tau(t,x,y)$ be the formula $[n_0]t \oplus \bigoplus_{i=1}^{\abs{a}} [n_i]x_i \oplus \bigoplus_j [m_j]y_j=e$. We call\footnote{Following the terminology in~\cite{dE18A}.} the formula $\tau(t,x,y)$ an \emph{equation strict in $x$, (algebraic in $t$)}, which means that $n_0\neq 0$ and $n_1,\dots,n_{\abs{x}}$ are not all zero. We now give an axiomatisation of $TG$.

\begin{axioms}
  $TG$ is the theory consisting of $T^G$ together with the following axiom scheme: for each $\phi(x,y)$ such that $\phi(x,y)$ defines irreducible quasi-varieties in $x$, for all partition $x = x^0x^1$, for all $k$ and equations $\tau_1(t,x,y),\dots,\tau_k(t,x,y)$ strict in $x^1$:
$$\forall y (\theta_\phi(y) \rightarrow (\exists x \phi(x,y)\wedge x^0\subseteq G \wedge \bigwedge_i \forall t \ \tau_i(t,x,y)  \rightarrow t\notin G)).$$
\end{axioms}

For the sake of completeness, we give a proof of Proposition~\ref{prop_mc}.

\begin{proof}
  As in Theorem~\cite[Theorem 1.5]{dE18A} or other similar results, the proof is in two steps. 
    \begin{center} \emph{Step 1: $TG$ is consistent.}\end{center}
    Given one instance of the axiom-scheme, say for $\phi(x,y)$, and $x = x^0x^1$. If some model $(B,G)$ of $T^G$ satisfies $\theta(b)$ for some tuple $b$ from $B$. Then there exists $A'\succ_{\LL} B$ and a tuple $a$ from $A'$ such that $\phi(a,b)$ and $a$ is an independent tuple over $B$ in the sense of the pregeometry in $(A',\oplus,\ominus,e)$. The partition $a = a^0a^1$ is given and define $G' = \vect{G,a^0}\subset A'$. Then $(A',G')$ is a model of $T^G$ that satisfies the conclusion of the instance of the axiom for a given $b$ and any formula $\tau(t,x,y)$ strict in $x^1$. Taking union of chains, one shows that every model of $T^G$ embedds in a model of $TG$. 
    \begin{center}\emph{Step 2: every model of $TG$ is existentially closed in every extension model of $T^G$.}\end{center}
    Let $(B,G)\models TG$ and $(A',G')\models T^G$ extending $(B,G)$. Let $b$ be a tuple from $B$ and $u$ a tuple from $A'$. The quantifier-free type of $u$ over $b$ is given by formulae $\phi(x,b)$ of the form 
    $$\psi(x,b)\wedge \bigwedge_i P_i(x,b)\in G \wedge \bigwedge_j Q_j(x,b)\notin G$$
    for $\LL$-terms $P_i(x,y),Q_j(x,y)$, and $\psi(x,y)$ a quantifier-free $\LL$-formula. If $\theta(z,y)$ is the formula $\exists x \psi(x,y)\wedge \bigwedge_i z_i = P_i(x,y)\wedge \bigwedge_j z_j = Q_j(x,y)$, then $\exists x \phi(x,y)$ is equivalent to $$\exists z \theta(z,y)\bigwedge_i z_i\in G \wedge \bigwedge_j z_j\notin G.$$
    Let $z_I, z_J$ the tuples of variables consisting of the $z_i$, respectively the $z_j$. Now assume that $(a,b)\models \theta(z,y)\wedge  z_I\subset G \wedge z_J\cap G=\emptyset$ for $a$ from $A'$ and $b$ from $B$. By modularity there exist tuples $a^G$ and $a'$ of elements of $A'$ such that $a^Ga'$ is independent over $B$ in the sense of the pregeometry of $(A',\oplus,\ominus,e)$, such that $\vect{B,a} = \vect{B,a^Ga'}$ and such that $G(\vect{B,a^Ga'}) = \vect{G(B),a^G}$. As $a\subset \vect{B,a^Ga'}$ and by modularity, we can choose finite tuples $c^G,c$ from $B$ such that $c^Gc$ is independent, $a\subset \vect{a^Ga'c^Gc}$ and $G(\vect{a^Ga'c^Gc}) = \vect{a^Gc^G}$. Now as $a_I\subset \vect{a^Gc^G}$ there is an $\set{\oplus,\ominus,e}$-formula $\lambda(a_I,a^G,c^G)$ that witnesses it. As $a_J\subset \vect{a^Ga'c^Gc}\setminus \vect{a^Gc^G}$, for each $j\in J$ there are formulae $\tau_j(z_j,x^G,x',v^G,v)$ strict in $x',v$ (and algebraic in $z_j$) such that for each $j\in J$ we have $\models \tau_j(a_j,a^G,a',c^G,c)$. Let $b' = bc^Gc$ and let $\Lambda(x^Gx',z_J,b')$ be the following formula
    $$\exists z \theta(z,b)\wedge \lambda(z_I,x^G,c^G)\wedge \bigwedge_{j\in J} \tau_j(z_j,x^G,x',c^G,c).$$
    The set of realisations of this $\LL$-formula can be decomposed as a finite union of irreducible quasi-varieties defined over $B$, and the tuple $a^Ga'$ is in one of those, let's assume that the latter is defined by the formula $\varphi(x^Gx',db')$. As the tuple $a^Ga'$ is independent over $B$, we have that $\models \theta_{\varphi}(db')$. By the axioms, there exists a tuple $\tilde a^G \tilde a'$ from $B$ such that $\tilde a^G\subset G(B)$ and all realisations of $\bigwedge_{j\in J} \tau_j(x_j,\tilde a^G,\tilde a',c^G,c)$ are not in $G$. Then the $z$-tuple whose existence is mentionned in $\Lambda$ satisfies the formula $\theta(z,b)\wedge  z_I\subset G \wedge z_J\cap G = \emptyset$. 
  \end{proof}

  The theory $T$ is stable, we denote by $\indi T$ the non-forking independence relation defined over every subset of a monster model of $T$. We denote by $\acl_T(X)$ the algebraic closure of $X\subset A$ in the sense of $T$. Then using \cite[Theorem 4.1, Corollary 4.3]{dE18A} as in the proof of \cite[Theorem 5.28]{dE18A}, we get the following Corollary.
  \begin{cor}
    The theory $TG$ is $\NSOP 1$ and not simple. Furthermore, Kim-independence over models is the relation
    $$X\indi{T}_A Y \text{ and } G(\vect{X,Y}) = \vect{G(X),G(Y)}$$
    for $A\models T$, $X,Y$ $\acl_T$-closed containing $A$.
  \end{cor}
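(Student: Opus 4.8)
The plan is to derive this Corollary by verifying that the hypotheses of the general machinery of~\cite[Theorem 4.1, Corollary 4.3]{dE18A} apply to our situation, exactly as in the proof of~\cite[Theorem 5.28]{dE18A}. The setup there concerns a generic expansion of a stable theory (here $T$) with QE by a predicate for a substructure closed under a certain closure operation (here a subgroup of $(A,\oplus,\ominus,e)$ that is a model of $T^A$), once one has established the hypotheses $(H_1)$--$(H_4)$; we have recorded $(H_1)$ (QE for $T$), $(H_2)$ (QE, strong minimality and modularity of $T^A$), and $(H_4)$ (Proposition~\ref{prop_H4}) in the list preceding Proposition~\ref{prop_mc}, and $(H_3)$ is the trivial observation that $\acl_{T^A}$ is the modular pregeometry closure $\vect{\cdot}$. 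So the first step is simply to invoke the cited theorems to obtain that $TG$ is $\NSOP 1$, with Kim-independence over models $(B,G)$ given by the stated formula: $X \indi{T}_B Y$ together with $G(\vect{X,Y}) = \vect{G(X),G(Y)}$, for $\acl_T$-closed $X,Y \supseteq B$.

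Second, I would prove non-simplicity. The cleanest route is to exhibit a forking chain of bounded length that Kim-forks, or equivalently to show that the relation above fails the local character / finite character properties characterizing simple theories; but in the framework of~\cite{dE18A} this is handled abstractly: the reduct $T^A$ is strongly minimal with a \emph{modular} pregeometry, hence the predicate $G$ picks out a substructure whose behaviour is governed by linear algebra over $\Z$ (really over $\Q$ modulo torsion, by Proposition~\ref{prop_abelianMT}). The key point is that $T^A$ restricted to the $G$-part is essentially the theory of a predicate for a subgroup of a $\Q$-vector space, and such generic pair constructions are known to be strictly $\NSOP 1$, never simple — indeed one can transplant the standard witness of non-simplicity (an unbounded array showing $G$-independence is not type-definable/has no local character, via an infinite descending sequence of divisible hulls of finitely generated subgroups) directly from~\cite[Theorem 5.28]{dE18A} or~\cite{dE18B}. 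Concretely, I expect to quote that the relevant hypothesis guaranteeing strict $\NSOP 1$ in~\cite{dE18A} is precisely that the pregeometry of $T^A$ is \emph{nontrivial} (the group law gives genuine linear dependencies), which holds here since $\di A = g \geq 1$.

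The main obstacle is not any single deep computation but the bookkeeping of matching our concrete data to the precise hypotheses of~\cite[Theorem 4.1, Corollary 4.3, Theorem 5.28]{dE18A}: one must check that ``being a model of $T^A$'' is the correct closure condition defining admissible substructures, that the pregeometry used throughout is uniformly the modular one of the reduct $T^A$ (not the one of $T$, which by the Remark after Proposition~\ref{prop_abelianMT} is never modular when $\di A = 1$), and that the torsion subgroup $A[\infty]$, which every model of $T^A$ must contain, causes no obstruction — it is $\acl_T$-contained in every substructure, so it is absorbed into the base and does not affect independence. Once these identifications are in place, the formula for Kim-independence is read off directly from the general theorem, and $\NSOP 1$ plus non-simplicity follow formally.

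\begin{proof}
  Apply~\cite[Theorem 4.1, Corollary 4.3]{dE18A} as in the proof of~\cite[Theorem 5.28]{dE18A}. We have collected above that $T$ has quantifier elimination, $T^A$ has quantifier elimination, is strongly minimal, and has a modular pregeometry given by the closure operation $\vect{\cdot}$, and that Proposition~\ref{prop_H4} holds; these are exactly hypotheses $(H_1)$--$(H_4)$ of~\cite{dE18A} (with $(H_3)$, identifying $\acl_{T^A}$ with $\vect{\cdot}$, following from Proposition~\ref{prop_abelianMT}). Hence $TG$ is $\NSOP 1$, and by~\cite[Corollary 4.3]{dE18A}, Kim-independence over a model $(B,G)$ of $TG$ is, for $\acl_T$-closed $X,Y$ containing $B$,
  $$ X \indi{T}_B Y \text{ and } G(\vect{X,Y}) = \vect{G(X),G(Y)}. $$
  Non-simplicity follows as in~\cite[Theorem 5.28]{dE18A} from the fact that the pregeometry of $T^A$ is nontrivial: the group operation produces genuine linear relations, so the argument of~\cite{dE18A} yielding the failure of the relevant local character property for the $G$-part goes through verbatim. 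Thus $TG$ is $\NSOP 1$ and not simple.
\end{proof}
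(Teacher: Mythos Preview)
Your overall strategy---invoke \cite[Theorem 4.1, Corollary 4.3]{dE18A} as in the ACFG case---is the paper's strategy too. But there is a genuine gap in the execution: you treat the hypotheses $(H_1)$--$(H_4)$ as sufficient input for \cite[Theorem 4.1]{dE18A}, and they are not. Those hypotheses feed into \cite[Theorem 1.5]{dE18A} and give the model companion; \cite[Theorem 4.1]{dE18A} requires a separate \emph{intersection} (or ``mixing'') condition, namely that for $\acl_T$-closed $X,Y,Z$ over a model $A$ with $Z\indi{T}_A X,Y$ one has
\[
\vect{\acl_T(XZ),\acl_T(YZ)}\cap \acl_T(XY) = \vect{X,Y}.
\]
The paper verifies this explicitly via an heir argument: given $w$ in the left-hand side, one writes $[n]w = u\oplus v$ with $u\in\acl_T(XZ)$, $v\in\acl_T(YZ)$, witnessed by algebraic formulae with parameters in $Z$; since $tp^T(\acl_T(XY)/Z)$ is an heir over $A$, the $Z$-parameters can be replaced by parameters in $A$, forcing $w\in\vect{X,Y}$. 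This step is short but not vacuous, and it is not subsumed by $(H_1)$--$(H_4)$; your proof omits it entirely.

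The non-simplicity argument is also misdirected. You locate the obstruction in the nontriviality of the pregeometry of $T^A$, but the hypothesis of \cite[Corollary 4.3]{dE18A} concerns the gap between $\acl_T$ and $\vect{\cdot}$, not the internal structure of $\vect{\cdot}$. Concretely, the paper takes independent generics $a,b,c$ over a model, picks $d\in \acl_T(Aac)\setminus\vect{\acl_T(Aa),\acl_T(Ac)}$ and $d'\in\acl_T(Abc)\setminus\acl_T(Aac)$, and checks that $d\oplus d'$ witnesses
\[
\vect{\acl_T(Aac),\acl_T(Abc)}\supsetneq \acl_T(Aac)\cup\vect{\acl_T(Aa),\acl_T(Abc)},
\]
which is the input to \cite[Corollary 4.3]{dE18A}. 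The existence of such $d$ uses that the $T$-algebraic closure (governed by the full variety structure) is strictly richer than the group-span; ``the group operation produces genuine linear relations'' is the wrong direction---what matters is that $\acl_T$ produces relations that are \emph{not} linear.
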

  \begin{proof}
    From \cite[Theorem 4.1]{dE18A} in order to prove that $TG$ is $\NSOP 1$ and to get the description of Kim-independence, it is sufficient to prove that if $X,Y,Z$ are $\acl_T$-closed, containing a model $A$ of $T$, and if $Z\indi T _A X,Y$, then $\vect{\acl_T(XZ),\acl_T(YZ) }\cap \acl_T(XY) = \vect{X,Y}$. Let $X,Y,Z$ be as such, and let $w\in\vect{\acl_T(XZ),\acl_T(YZ) }\cap \acl_T(XY)$. Then there exists $u\in  \acl_T(XZ)$ witnessed by the algebraic formula $\phi(u,m)$ where $m\in Z$ an $\psi(x,z)$ has parameters in $X$ and $v\in \acl_T(YZ)$, witnessed by the algebraic formula $\psi(v,m)$ for $m\in Z$ and $\psi(y,z)$ has parameters in $Y$, such that $[n]w = u\oplus v$. It follows that $\models \exists xy\ [n]w = x\oplus y\wedge \phi(x,m)\wedge \psi(y,m)$. Now by stability, $tp^T(\acl_T(XY)/Z)$ is heir of $tp^T(\acl_T(XY)/A)$ and since $w\in \acl_T(XY)$, there is some $a\in A$ such that $\models \exists xy\ [n]w = x\oplus y\wedge \phi(x,a)\wedge \psi(y,a)$. As $X$ and $Y$ are $\acl_T$-closed, we conclude that $w\in \vect{X,Y}$. The other inclusion being trivial, we conclude that $\vect{\acl_T(XZ),\acl_T(YZ) }\cap \acl_T(XY) = \vect{X,Y}$. Let $a,b,c$ be three generics independent over some model $A$. Then it is clear that $\acl_T(Aac)\supsetneq \vect{\acl_T(Aa),\acl_T(Ac)}$, let $d$ be in $\acl_T(Aac)\setminus\vect{\acl_T(Aa),\acl_T(Ac)}$. Let $d'$ be in $\acl_T(Abc)\setminus \acl_T(Aac)$. Then $d\oplus d'$ is not in $\acl_T(Aac)$ and neither in $\vect{\acl_T(Aa),\acl_T(Abc)}$. We conclude that $\vect{\acl_T(Aac),\acl_T(Abc)}\supsetneq \acl_T(Aac)\cup\vect{\acl_T(Aa),\acl_T(Abc)}$. By \cite[Corollary 4.3]{dE18A}, we conclude that $TG$ is not simple. 
  \end{proof}
  \begin{rk}
    Note that every $\acl_T$-closed set is a model of $T$, so Kim-independence is defined over every $\acl_T$-closed sets. 
  \end{rk}

  \begin{rk}\label{rk_necess}
    A question one might ask is the following: 
    \begin{center}is $\End(A) = \Z$ a necessary condition for $TG$ to exist?\end{center}
    We don't have the answer in the context of abelian varieties, however, if we allow $A$ to be an affine algebraic group we get a negative answer. In ~\cite{dE18A}, $\ACFG$ is the model-companion of an algebraically closed field of fixed positive characteristic with a predicate for an additive subgroup. In the context of this note, it is $TG$ with $T = \ACF_p$ for $p>0$ and $A = \G_a$. There are plenty of nontrivial endomorphisms of $\G_a$ (e.g. $x\mapsto a\cdot x$, $\Frob$,\dots), and this fact does not affect the existence of a model-companion for $T^G$. Assume that $A$ is some abelian algebraic group, if there exists an endomorphisms $\theta\in \End(A)\setminus \Z$, then it is an easy exercise to show that in any existentially closed model of $T^G$, the image $\theta (G)$ is not included in $G$, in fact, $\theta(G)$ is generic in $A$. In particular, if $\End(A)$ and its action on $A$ are definable, then the stabiliser of $G$ under the action of $\End(A)$ is definable. In particular, if $A=\G_a^n$ and the ambient field is of characteristic $0$, then the stabiliser of $G$ is $\Z$ (or $\Q$ if $G$ is assumed divisible) and the model-companion of $T^G$ does not exist (see~\cite[Subsection 5.6]{dE18A} for the case $n = 1$). Conversely, if the endomorphism ring is definable and infinite, then it is an algebraically closed field, hence the group $A$ is a $K$-vector space, hence isomorphic to $\G_a^n$. 
  \end{rk}


  \section{A more general case: generic $\End(A)$-submodule of $A$.}
  
  We end this note by a discussion on a slightly more general setting. We still assume that $A$ is simple. Let $R$ be a ring of definable endomorphisms of $A$, and extend the language $\LL$ to $\LL_* = \LL\cup \set{\lambda_r\mid r\in R}$ by function symbols $\lambda_r$ for each element of $R$, and let $T_*$ be the natural expansion by definition to the language $\LL_*$ of the $\LL$-theory $T$. Let $T^A_*$ be the theory of $(A, \oplus,\ominus,e,(\lambda_r)_{r\in R})$, it is a reduct of $T^*$. Expand also $\LL^G_* = \LL^G\cup \set{\lambda_r\mid r\in R}$ in the same way and let $T^G_*$ be the $\LL^G_*$-theory $T^G$ in which each $\lambda_r$ is interpreted as the corresponding endomorphism of $A$, and $(G,\oplus,\ominus,e,(\lambda_r)_{r\in R})$ is an $R$-submodule of $A$, model of $T^A_*$. In order to get that $T^G_*$ has a model-companion using~\cite[Theorem 1.5, Remark 1.8]{dE18A}, it is sufficient to have that 
  \begin{enumerate}
    \item $T_*$ has quantifier elimination in $\LL_*$, which is clear;
    \item $T^A_*$ has quantifier elimination in $\set{\oplus,\ominus,e, (\lambda_r)_{r\in R}}$, is strongly minimal, and the algebraic closure defines a modular pregeometry;
    \item the analogue of Proposition~\ref{prop_H4}, replacing “independent in the sense of the pregeometry in $(A,\oplus,\ominus,e)$” by “independent in the sense of the $R$-module $A$”, which easily follows from Proposition~\ref{prop_free} and Fact~\ref{fact_free}, as in the proof of Proposition~\ref{prop_H4}.
\end{enumerate}
Only $(2)$ needs to be checked. It should follow from the fact that in a simple variety, every endomorphism is onto and has finite kernel. Then one should conclude that $T^G_*$ has a model-companion.\\

\noindent \textbf{Acknowledgement.} The author is grateful to Alf Onshuus and Martin Bays for fruitful discussions on that subject, and to Magnus Carlson for his help. This note is the result of fruitful discussions during a visit in the Universidad de Los Andes in Bogotà. The author is grateful to the ECOS Nord program for supporting this visit.

 \bibliographystyle{alpha}
\bibliography{biblio}

\end{document}